\newtheorem{theorem}{Theorem}[section]
\newtheorem{lemma}[theorem]{Lemma}
\newcommand\inprod[2]{\left<#1,#2\right>}
\renewcommand\Im{\mathrm{Im}}
\newcommand\disc{\mathrm{disc}}
\title{Configurations of Extremal Even Unimodular Lattices}
\author{Scott Duke Kominers}
\address{Department of Mathematics, Harvard University\\c/o 8520 Burning Tree Road, Bethesda, MD, 20817\\
e-mail: \texttt{kominers@fas.harvard.edu}}
	\subjclass{11H06 (52C07, 05B30)}
\keywords{Even unimodular lattices, extremal lattices, weighted theta series}
\date{November 12, 2007}
\begin{document}

\begin{abstract}We extend the results of Ozeki on the
configurations of extremal even unimodular lattices.  
Specifically, we show that if $L$ is such a lattice 
of rank $56$, $72$, or $96$, then $L$ is generated 
by its minimal-norm vectors.
\end{abstract}
\maketitle
\section{Background}

Even unimodular lattices have been  studied extensively.  There is
a unique  even unimodular lattice of rank $8$, the $E_8$ lattice
(see \cite{Conway:SPLAG}).  Niemeier \cite{Niemeier} gave a complete
characterization of the   even unimodular lattices of rank $24$,
showing in particular that the Leech lattice $\Lambda_{24}$ is the
unique extremal even unimodular lattice of rank $24$.  This and
a similar classification problem were studied by Venkov
\cite{Venkov:32, Venkov:24}.  Venkov~\cite{Venkov:32} and Ozeki
\cite{Ozeki1,Ozeki2,Ozeki3} showed a series of ``configuration''
results for even unimodular lattices.

Specifically, Venkov \cite{Venkov:32} showed that if $L$ is an extremal even unimodular
lattice of rank $32$, then $L$ is generated by its
vectors of minimal norm.  Ozeki \cite{Ozeki1,Ozeki3}
obtained the same result for extremal even unimodular
lattices of rank $32$ and showed the analogous result for extremal
even unimodular lattices of rank $48$.  We extend these results to
cover extremal even unimodular lattices of ranks $56$, $72$, and 
$96$.\footnote{Note that while an extremal even unimodular lattice of rank 
$56$ has been found, the question of whether there exist even unimodular 
lattices of ranks  $72$ and $96$ is still open (see \cite[p. 194]{Conway:SPLAG}).}

\section{Introduction}
A \textit{lattice} of \textit{rank $n$} is a free $\mathbb{Z}$-module
of rank $n$ equipped with a
positive-definite inner product $\inprod{\cdot}{\cdot}:L\times L \to
\mathbb{R}$.   The dual
lattice of $L$, denoted by $L^*$, is given by $$L^*=\{y\in L\otimes
\mathbb{R}\vert
\inprod{y}{x}\in\mathbb{Z}\ \text{for all}\ x\in L\}.$$ 

\subsection{Even Unimodular Lattices}

A lattice $L$ is said to be \textit{integral} if $\inprod{x}{y}\in\mathbb{Z}$
for all $x,y\in L$. An integral lattice $L$ is called \textit{even} if all its
vectors have even norm, that is, $\inprod{x}{x}\in 2\mathbb{Z}$ for all
vectors $x\in L$.   An integral lattice $L$ is said to be \textit{unimodular}
if its dual is itself ($L^*=L$).

Even unimodular lattices exist only in ranks which are multiples
of eight.  Sloane  showed that when $L$ is even
unimodular of rank $n$, the minimal norm of $L$ is bounded by
\begin{equation}\min_{\stackrel{x\in L}{x\neq 0}}\{\inprod{x}{x}\}\leq 2\lfloor
n/24\rfloor+2\label{minnorm}\end{equation}(see \cite[p. 194, Cor.
21]{Conway:SPLAG}).
A rank-$n$ even unimodular lattice is called \emph{extremal} if it attains the
bound (\ref{minnorm}).

\subsection{Theta Functions and Modular Forms}

Each lattice $L$ has an associated \textit{theta function} $\Theta_L$, a
generating function that encodes the norms of the vectors of $L$ in
the form $$\Theta_L(z)=\sum_{x\in L}e^{\pi iz\inprod{x}{x}},$$ where $z$
lies in the complex upper half-plane of complex numbers,
$$\mathcal{H}=\{x\in\mathbb{C}|\Im(x)>0\}.$$

A polynomial $P\in\mathbb{C}[x_1,...,x_n]$ is called
\textit{harmonic} if $\Delta P=0$, where
$\Delta=\sum_{i=1}^n\frac{\partial^2}{\partial x_i^2}$ is the Laplace
operator.  For a lattice $L$ and homogeneous harmonic polynomial $P$
of degree $d$, the \textit{weighted theta series} $\Theta_{L,P}$ is
the function $$\Theta_{L,P}(z)=\sum_{x\in L}P(x)e^{\pi
iz\inprod{x}{x}},$$ where, again, $z\in\mathcal{H}$.

A holomorphic function $f:\mathcal{H}\rightarrow\mathbb{C}$ is
called a \textit{modular form of weight $k$ for $SL_2(\mathbb{Z})$}
if it is holomorphic at $i\infty$ and satisfies the condition
$$f\left(\frac{az+b}{cz+d}\right)=(cz+d)^kf(z)$$ for all
$\left(\begin{array}{cc}a&b\\c&d\end{array}\right)\in
SL_2(\mathbb{Z})/\{\pm1\}$.  If a modular form $f$ vanishes at $z=i\infty$,
it is called a \textit{cusp form}.

A general introduction to modular forms for $SL_2(\mathbb{Z})$
may be found in \cite{Serre:course}.  As shown there, if $M_k$ and
$M_k^0$ are, respectively, the $\mathbb{C}$-vector spaces of modular forms
and cusp forms of weight $k$, then $\dim(M_k)=0$ for $k$ odd, $k<0$, or $k=2$.  Also,
$\dim(M_{2k})=1$ and $\dim(M_{2k}^0)=0$ for $4\leq 2k\leq 10$ and $2k=14$.  Further,
multiplication by the form $\Delta=12^{-3}(E_4^3-E_6^2)$ gives an
isomorphism $$M_{k-12}\stackrel{\sim}{\longrightarrow}M_k^0.$$

When $L$ is even unimodular of rank $n$, the theta function $\Theta_L$
is a modular form of weight $\frac{n}{2}$. As shown in
\cite{Schoeneberg},
the weighted theta function $\Theta_{L,P}$ of an even unimodular
lattice of rank $n$ is a cusp form of weight $\frac{n}{2}+d$,
where $d>0$ is the degree of the polynomial $P$ (see \cite{Ebeling}).

\subsection{Methods}
For an extremal even unimodular lattice $L$ having minimal norm $2m$,
the first $m-1$ coefficients of the weighted theta function
$\Theta_{L,P}$ must vanish, as these coefficients count the vectors of
norms less than $2m$.  We must then have $$\Theta_{L,P}\equiv 0$$ for
several values of $d=\deg P$.  From this, we extract linear conditions
on the vectors in $L$ which allow us to restrict the possible
configurations of $L$.

\section{Dimension 72}\label{72!!}

We adopt Ozeki's notation. For an even unimodular lattice $L$,
we denote by $\Lambda_{2m}(L)$ the set of vectors in $L$ having norm $2m$.
We denote $|\Lambda_{2k}(L)|$ by $a(2k,L)$.  It is clear that the theta series
$\Theta_L$ is given by $\Theta_L(z)=\sum_{k=0}^\infty a(2k,L)q^k$, where $q=e^{2\pi iz}$.
We also use the notation  $$N_i(x)=\textrm{card}(\{y\in
\Lambda_{m(L)}(L)|\inprod{x}{y}=i\}),$$
where $m(L)$ is the minimal norm $\min\{2k>0|a(2k,L)\neq 0\}$.  Using the involution  
$y \longleftrightarrow -y$ of $\Lambda_{m(L)}(L)$, we see that $N_i(x) = N_{-i}(x)$ for 
any $x\in L\otimes\mathbb{R}$.

We have the following configuration result for extremal even
unimodular lattices of rank $72$:
\begin{theorem}\label{72!}
If $L$ is an extremal even unimodular lattice of rank $72$,
then $L$ is generated by $\Lambda_8(L)$.
\end{theorem}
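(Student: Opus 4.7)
The plan is to adapt the Venkov--Ozeki method. For the first step, I apply the modular-form dictionary of the excerpt: for any harmonic polynomial $P$ of degree $d$, the weighted theta series $\Theta_{L,P}$ is a cusp form of weight $36+d$ on $SL_2(\mathbb{Z})$. Since $L$ is extremal with minimum norm $8$, the coefficients of $\Theta_{L,P}$ at $q^1$, $q^2$, and $q^3$ all vanish, and the standard dimension formulas (together with the isomorphism $M_{k-12}\stackrel{\sim}{\longrightarrow}M_k^0$) give $\dim M_{36+d}^0 \leq 3$ for each $d\in\{2,4,6,8,10\}$. Since odd $d$ gives $\Theta_{L,P}\equiv 0$ automatically from $P(-y) = -P(y)$, this forces $\Theta_{L,P} \equiv 0$ for every harmonic $P$ of degree $1 \leq d \leq 11$. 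Equivalently, $\Lambda_8(L)$ is a spherical $11$-design.

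For the second step, fix $x \in L\otimes\mathbb{R}$ and expand $\inprod{x}{y}^{2k}$ in $y$ as a sum of harmonic polynomials of degrees $2k, 2k-2,\ldots,0$ times matching powers of $\inprod{y}{y}$. Restricting to $y\in\Lambda_8(L)$ (where $\inprod{y}{y} = 8$) and summing, the $11$-design property annihilates every term except the degree-$0$ harmonic component, yielding
$$\sum_{y\in\Lambda_8(L)}\inprod{x}{y}^{2k} \;=\; c_{2k}\,a(8,L)\,\inprod{x}{x}^{k} \qquad (k = 0,1,2,3,4,5)$$
for explicit rational constants $c_{2k}$ depending only on $k$ and the rank $72$; the number $a(8,L)$ is read off the unique weight-$36$ extremal modular form. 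Using the symmetry $N_i(x) = N_{-i}(x)$ and the Cauchy--Schwarz bound $|i| \leq \sqrt{8\inprod{x}{x}}$, these six identities become a linear system in the nonnegative integers $\{N_i(x)\}_{i\geq 0}$.

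Finally, set $L_0 := \langle \Lambda_8(L)\rangle_{\mathbb{Z}}$ and suppose for contradiction that $L_0 \subsetneq L$. The $k=1$ identity above forces $\Lambda_8(L)$ to span $L\otimes\mathbb{R}$, so $L_0$ has full rank, whence $L_0\subsetneq L\subsetneq L_0^*$. The moment identities apply to any $x\in L_0^*$ (since then $\inprod{x}{y}\in\mathbb{Z}$ for every $y\in L_0\supseteq\Lambda_8(L)$, so $N_i(x)$ is defined over $\mathbb{Z}$). The strategy is to choose $x$ in a suitable coset of $L_0$ inside $L$ or $L_0^*$ for which the Cauchy--Schwarz bound and the six identities overdetermine $\{N_i(x)\}$ so tightly that some entry is forced to violate nonnegativity or integrality, contradicting the existence of $x$. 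The main obstacle, paralleling the bulk of Ozeki's work for rank $48$, is the ensuing case analysis: one must enumerate the admissible values of $\inprod{x}{x}$ and coset classes in $L_0^*/L_0$ and rule each out arithmetically. This is somewhat more intricate at rank $72$ than at rank $48$ because $a(8,L)$ and the constants $c_{2k}$ are larger and more candidate norm values arise, but the structural outline of the proof is identical.
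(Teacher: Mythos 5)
Your first two steps (the $11$-design property of $\Lambda_8(L)$ and the moment identities $\sum_{y\in\Lambda_8(L)}\inprod{x}{y}^{2k}=c_{2k}\,a(8,L)\,\inprod{x}{x}^k$ for $0\leq k\leq 5$) are correct and coincide with the paper's starting point. But there is a genuine gap at the decisive step: the bound on the inner products. You invoke Cauchy--Schwarz, which for a vector $x$ of norm $2t$ gives only $|\inprod{x}{y}|\leq\sqrt{16t}=4\sqrt{t}$; since a minimal nonzero representative has $t\geq 5$ (extremality kills norms $2$, $4$, $6$), this permits $|i|$ up to $8$ and beyond, so you face at least nine unknowns $N_0,\dots,N_8$ against six equations --- the system is \emph{under}determined, not overdetermined, and only gets worse as $t$ grows. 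The idea that actually closes the argument is different: take $x_0$ to be a \emph{minimal-norm representative of its coset} modulo the sublattice generated by $\Lambda_8(L)$. Then $|\inprod{x_0}{y}|\leq 4$ for every $y\in\Lambda_8(L)$, since otherwise $y\mp x_0$ lies in the same coset with $\inprod{y\mp x_0}{y\mp x_0}=8\mp 2\inprod{y}{x_0}+2t<2t$, contradicting minimality. This caps the unknowns at $N_0,\dots,N_4$ --- five of them, uniformly in $t$ --- against the six equations $k=0,\dots,5$.

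Your endgame is also only a plan, not an argument: you propose to choose $x$ in a coset of $L_0^*/L_0$ and run an unspecified case analysis over admissible norms until nonnegativity or integrality fails. In rank $72$ no passage to $L_0^*$ and no enumeration of cosets or norms is needed (that machinery belongs to the rank-$96$ proof): one works directly with $x_0\in L$, where the $N_i(x_0)$ are automatically integers, and the contradiction is uniform in $t$. The augmented $6\times 6$ matrix of the inhomogeneous system has determinant $2^{25}3^95^47^2\,t\,(168t^4-2800t^3+17745t^2-50635t+54834)$, and the quartic factor has no positive real roots, so the system is inconsistent for every real $t>0$; no integrality or positivity considerations enter at all. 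Without the minimality bound and this determinant computation, the proposal does not reach a contradiction.
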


\begin{proof}We partition $L$ into its equivalence
classes modulo $\Lambda_8(L)$; to show the theorem it suffices to show that
any class $[x]\in L/(\Lambda_8(L))$ is represented by a vector $x_0\in [x]$
with $\inprod{x_0}{x_0}\leq 8$.

For any equivalence class $[x_0]$ where $x_0\neq 0$ is a
representative of minimal norm, we have $\inprod{x_0}{x_0}=2t$ for
some $t\geq 5$,
since we have from the fact that $L$ is extremal that
$a(2,L)=a(4,L)=a(6,L)=0$.  In addition, we have the inequality
$$|\inprod{x_0}{x}|\leq 4$$ for all $x\in \Lambda_8(L)$, because if
$\inprod{x_0}{\pm x}>4$ then $L$ contains a vector $x\mp x_0$ of norm
$$\inprod{x\mp x_0}{x\mp x_0}=\inprod{x}{x}\mp
2\inprod{x}{x_0}+\inprod{x_0}{x_0}<\inprod{x_0}{x_0},$$
contradicting the minimality of $x_0$ in $[x_0]$.

Since $|\inprod{x_0}{x}|\leq 4$ for $x\in \Lambda_8(L)$,
we have \begin{equation}
\label{hapki}\sum_{x\in\Lambda_8(L)}\inprod{x}{x_0}^{2k}=2\sum_{i=1}^4
i^{2k}\cdot N_i(x_0),
\end{equation}for all $k>0$.

Since $L$ is an extremal even unimodular lattice whose rank is a
multiple of $24$, we know that $\Lambda_8(L)$ is a spherical
$11$-design \cite[p. 196, Theorem 23]{Conway:SPLAG}.  Hence,
we have that for $1\leq k\leq 5$, the average value of
the degree-$2k$ polynomials $x\mapsto \inprod{x}{x_0}^{2k}$ over
$8^{-\frac{1}{2}}\Lambda_{8}(L)$ equals the average
value of $x\mapsto \inprod{x}{x_0}^{2k}$ over the unit sphere.   

We obtain from the theta series for $L$ that $|\Lambda_{8}(L)|=6218175600$.  We thus have for $1\leq k\leq 5$
\begin{eqnarray}
\sum_{x\in\Lambda_{8}(L)}\nonumber\inprod{x}{x_0}^{2k}&=&
\vert\Lambda_{8}(L)\vert\int_{S^{71}}\inprod{x}{x_0}^{2k}\,d\mu(x)
\\\nonumber&=&6218175600\frac{1\cdot3 \cdots (2k-1)}{72\cdot
(72+2)\cdots (72+2k-2)}8^k
\inprod{x_0}{x_0}^k\\&=&6218175600\frac{1\cdot3 \cdots (2k-1)}{72\cdot
(72+2)\cdots (72+2k-2)}16^kt^k\label{first}.
\end{eqnarray}

Further, the fact that $|\Lambda_{8}(L)|=6218175600$ immediately gives that \begin{equation}
6218175600=|\Lambda_{8}(L)|=N_0(x_0)+2\sum_{i=1}^4N_i(x_0).\label{L8}
\end{equation}

Combining this equation (\ref{L8}) with equations (\ref{hapki}) and 
(\ref{first}) for $1\leq k\leq 5$ yields a
system of six equations in the five unknowns $N_i(x_0)$ ($0\leq i\leq
4$).  We obtain the determinant of the (extended) $6\times 6$ matrix 
for this inhomogeneous system, 
$$2^{25}3^95^47^2 t (168t^4-2800 t^3+17745 t^2-50635 t+54834),$$
which has no real solutions $t> 0$.  In particular, then, we
cannot have any integral solutions for the $N_i(x_0)$ when $t>4$, so
we must instead have $\inprod{x_0}{x_0}=2t=0$, which proves the
result.
\end{proof}

\section{Dimension 96}\label{96!!}

We continue to  use the notations introduced in Section \ref{72!!}.
We also denote by $P_{d,x_0}(x)$ the ``zonal spherical harmonic
polynomial''
of degree $d$, related to the
\textit{Gegenbauer polynomial} by
\begin{equation}\label{GEG}P_{d,x_0}(x)=G_d(\inprod{x}{x_0},
(\inprod{x}{x}\inprod{x_0}{x_0})^{1/2}),\end{equation}
where $G_d(t,1)$ is the Gegenbauer polynomial of  degree $d$ evaluated
at $t$ \cite{Geg}.

We find a configuration result for extremal even unimodular lattices
of rank $96$ analogous to our result for such lattices of rank $72$.
First, however, we will need a lemma which generalizes
part of Corollary 3.4 of \cite{Ebeling}.

\begin{lemma}\label{lemtha}
If $L$ is an extremal even unimodular lattice of rank $n$,
then $\Lambda_{m}(L)$ spans $L\otimes\mathbb{R}$ for any $m>0$ such that 
$\Lambda_{m}(L)$ is nonempty.
\end{lemma}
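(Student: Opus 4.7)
The plan is to argue by contradiction. Suppose $\Lambda_m(L)$ does not span $L\otimes\mathbb{R}$; then some nonzero vector $x_0\in L\otimes\mathbb{R}$ is orthogonal to every element of $\Lambda_m(L)$. To this $x_0$ I would associate the degree-$2$ polynomial
\[
H(x)=\inprod{x}{x_0}^{2}-\frac{\inprod{x_0}{x_0}}{n}\inprod{x}{x},
\]
a standard ``harmonic projection.''  A quick Laplacian computation (the first summand contributes $2\inprod{x_0}{x_0}$ and the second contributes $(\inprod{x_0}{x_0}/n)\cdot 2n=2\inprod{x_0}{x_0}$) confirms that $H$ is harmonic.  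By Schoeneberg's theorem cited in Section~2, the weighted theta series $\Theta_{L,H}$ is then a cusp form of weight $n/2+2$ for $SL_2(\mathbb{Z})$.

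The main work is to show that $\Theta_{L,H}$ vanishes identically.  Extremality gives $a(2j,L)=0$ for $1\le j\le\lfloor n/24\rfloor$, so the Fourier coefficients of $\Theta_{L,H}$ at $q^{0},q^{1},\ldots,q^{\lfloor n/24\rfloor}$ all vanish.  Applying the isomorphism $M_{k-12}\stackrel{\sim}{\longrightarrow}M_{k}^{0}$ of Section~2 once for each vanishing coefficient writes $\Theta_{L,H}=\Delta^{\lfloor n/24\rfloor+1}g$ for some $g\in M_{n/2+2-12(\lfloor n/24\rfloor+1)}$.  Writing $n=24\lfloor n/24\rfloor+r$ with $r\in\{0,8,16\}$ (using that the rank of any even unimodular lattice is a multiple of $8$), the weight of $g$ simplifies to $r/2-10\in\{-10,-6,-2\}$, always strictly negative; hence $M_{r/2-10}=0$ and $\Theta_{L,H}\equiv 0$.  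This modular-form vanishing is the delicate step, and its success uniformly in $n$ (and uniformly in $m$) is exactly what enables the argument to strengthen the minimum-norm case of Corollary~3.4 of \cite{Ebeling}.

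Finally, from the identical vanishing of $\Theta_{L,H}$ I would read off the coefficient of $q^{m/2}$ to obtain $\sum_{x\in\Lambda_m(L)}H(x)=0$.  For each $x\in\Lambda_m(L)$ the conditions $\inprod{x}{x_0}=0$ and $\inprod{x}{x}=m$ force $H(x)=-\inprod{x_0}{x_0}m/n$, a single nonzero constant.  The sum therefore equals $-\inprod{x_0}{x_0}m\cdot|\Lambda_m(L)|/n\ne 0$, contradicting $x_0\ne 0$, $m>0$, and $|\Lambda_m(L)|>0$, and completing the proof.
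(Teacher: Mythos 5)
Your proof is correct, and its skeleton is the same as the paper's: both take a putative $x_0\neq 0$ orthogonal to $\Lambda_m(L)$, both use the degree-$2$ harmonic polynomial $\inprod{x}{x_0}^2-\tfrac{1}{n}\inprod{x_0}{x_0}\inprod{x}{x}$ (the paper's $P_{2,x_0}$), and both reach the same contradiction $|\Lambda_m(L)|=0$ from the vanishing of its sum over the shell. The difference lies in how that vanishing is justified. The paper simply cites Venkov's theorem that every nonempty shell of an extremal even unimodular lattice is a spherical $3$-design, which gives $\sum_{x\in\Lambda_m(L)}P_{2,x_0}(x)=0$ in one line. You instead re-derive exactly this identity from first principles: Schoeneberg makes $\Theta_{L,H}$ a cusp form of weight $n/2+2$, extremality kills the coefficients of $q^0,\dots,q^{\lfloor n/24\rfloor}$, iterating the isomorphism $M_{k-12}\cong M_k^0$ leaves a form of weight $r/2-10<0$ for $r=n-24\lfloor n/24\rfloor\in\{0,8,16\}$, and so $\Theta_{L,H}\equiv 0$; reading off the $q^{m/2}$ coefficient gives the identity. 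This is in effect an unpacking of the proof of the design property in the one case needed, so your argument is self-contained where the paper's relies on a citation; the cost is length, and the benefit is that the reader sees explicitly why extremality (rather than mere unimodularity) is what makes the lemma work uniformly in $m$ and $n$. All the computational details you supply (the Laplacian check, the weight bookkeeping, the evaluation $H(x)=-\inprod{x_0}{x_0}m/n$ on the shell) are accurate.
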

\begin{proof}
We have $$P_{2,x_0}(x)=\inprod{x}{x_0}^2-\frac{\inprod{x}{x}\inprod{x_0}{x_0}}{n}$$
spherical harmonic in dimension $n$.  For fixed
$x_0\in L\otimes\mathbb{R}$, then,
\begin{equation}\label{USEP2}\sum_{x\in\Lambda_{m}(L)}P_{2,x_0}(x)=0\end{equation}
since $\Lambda_{m}(L)$ is a spherical $3$-design by \cite{Venkov:Res}, as 
$L$ is an extremal even unimodular lattice of rank $n$.

Substituting the explicit form of $P_{2,x_0}$ into (\ref{USEP2})
gives \begin{eqnarray}\label{CoolBean}\sum_{x\in\Lambda_{m}(L)}
\inprod{x}{x_0}^2&=&\frac{1}{n}\left(\sum_{x\in\Lambda_{m}(L)}
\inprod{x}{x}\right)\inprod{x_0}{x_0}\\\nonumber&=&\frac{1}{n}\cdot
m\cdot \vert \Lambda_{m}(L)\vert\inprod{x_0}{x_0}.\end{eqnarray}
If there were some $x_0\in L\otimes\mathbb{R}$ not in the span of
$\Lambda_{m}(L)$, 
then the left-hand side of (\ref{CoolBean})
would vanish, implying $\vert\Lambda_{m}(L)\vert=0$.  By hypothesis, however, 
we have required $\vert\Lambda_{m}(L)\vert> 0$; hence, we have the lemma.
\end{proof}

We now proceed with the configuration theorem for rank-$96$
extremal even unimodular lattices:
\begin{theorem}\label{96!}
If $L$ is an extremal even unimodular lattice of rank $96$,
then $L$ is generated by $\Lambda_{10}(L)$.
\end{theorem}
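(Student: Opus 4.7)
My plan is to follow the template set by the proof of Theorem~\ref{72!}, with the numerics adjusted for rank $96$ and extremal minimum norm $2m = 10$.  I would begin by partitioning $L$ into equivalence classes modulo $\Lambda_{10}(L)$ and reducing to the claim that every nontrivial class has a representative of norm $\leq 10$.  For a minimal-norm representative $x_0 \neq 0$ of such a class, extremality gives $a(2) = a(4) = a(6) = a(8) = 0$, so $\inprod{x_0}{x_0} = 2t$ with $t \geq 6$; the reflection argument from the rank-$72$ case yields $|\inprod{x_0}{x}| \leq 5$ for every $x \in \Lambda_{10}(L)$.  Using $N_i = N_{-i}$, the configuration is encoded in six nonnegative-integer unknowns $N_0(x_0), N_1(x_0), \ldots, N_5(x_0)$, and $|\Lambda_{10}(L)|$ is read off from the extremal theta series.

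A dimension count on cusp forms of weight $48+d$ (showing $\dim M_{48+d}^0 \leq 4$ for even $d$ with $2 \leq d \leq 10$) gives via Venkov's criterion that $\Lambda_{10}(L)$ is a spherical $11$-design.  For $1 \leq k \leq 5$ this produces the moment equations
$$\sum_{x \in \Lambda_{10}(L)} \inprod{x}{x_0}^{2k} \;=\; 2\sum_{i=1}^5 i^{2k}N_i(x_0) \;=\; |\Lambda_{10}(L)| \cdot \frac{(2k-1)!!}{96 \cdot 98 \cdots (96+2k-2)} \cdot 20^k \, t^k,$$
which, together with the count $|\Lambda_{10}(L)| = N_0(x_0) + 2\sum_{i=1}^5 N_i(x_0)$, give six linear equations in the six unknowns.

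The principal obstacle --- where this case diverges from rank $72$ --- is that the system so far is square rather than overdetermined, so no immediate determinant condition on $t$ falls out.  To recover the overdetermined structure I would look for a seventh relation using the degree-$12$ zonal spherical harmonic $P_{12, x_0}$ of equation~(\ref{GEG}).  Although $\Theta_{L, P_{12, x_0}}$ is not forced to vanish identically (since $\dim M_{60}^0 = 5$), the subspace of weight-$60$ cusp forms with $a_1 = a_2 = a_3 = a_4 = 0$ is only one-dimensional (spanned by $\Delta^5$), so one has $\Theta_{L, P_{12, x_0}} = c(x_0) \Delta^5$ for a specific $c(x_0)$; comparing $q^5$-coefficients then yields $\sum_{x \in \Lambda_{10}(L)} P_{12, x_0}(x) = c(x_0)$.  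Determining $c(x_0)$ as a polynomial in $t$ via the rigidity of this one-dimensional subspace (where Lemma~\ref{lemtha} is available to handle any degeneracy) supplies the required seventh equation.

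With this extra relation I would then compute the determinant of the resulting $7 \times 7$ augmented matrix and verify that it factors as $t$ times a polynomial with no real roots in the range $t \geq 6$, paralleling the $6 \times 6$ computation of the rank-$72$ case and forcing $t = 0$ (i.e., $x_0 = 0$) as the only possibility.  The chief remaining technical hurdles are the explicit determination of $c(x_0)$ and the polynomial-root analysis for the resulting polynomial in $t$.
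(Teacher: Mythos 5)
Your setup (minimal class representatives, the bound $|\inprod{x_0}{x}|\leq 5$, the six unknowns $N_0,\dots,N_5$, the $11$-design moments for $1\leq k\leq 5$, and the need for a seventh relation) matches the paper's strategy, but two of your steps fail as described. First, the seventh equation: you propose degree $d=12$, correctly note that $\dim M_{60}^0=5$ so that $\Theta_{L,P_{12,x_0}}=c(x_0)\Delta^5$, and then hope to ``determine $c(x_0)$ as a polynomial in $t$.'' There is no such determination: comparing $q^5$-coefficients gives $\sum_{x\in\Lambda_{10}(L)}P_{12,x_0}(x)=c(x_0)$, which is circular, and $c(x_0)$ genuinely depends on the configuration of $\Lambda_{10}(L)$ relative to $x_0$ rather than on $t$ alone. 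The paper sidesteps this by taking $d=14$ instead: $\Theta_{L,P_{14,x_0}}$ has weight $62$, $\dim M_{62}^0=4$, and the vanishing of the $q^1,\dots,q^4$ coefficients (forced by extremality) kills the form outright, yielding an honest seventh linear relation $\sum_{x\in\Lambda_{10}(L)}P_{14,x_0}(x)=0$.

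Second, and more seriously, the endgame does not close the way you predict. The determinant of the $7\times 7$ augmented system factors (per the paper) as $Ks(s-12)Q(s)$ with $s=\inprod{x_0}{x_0}$ and $Q$ an irreducible quintic, so $s=12$ \emph{is} a root: your claim that the polynomial has no real roots with $t\geq 6$ is false, and a class of minimal norm $12$ survives your argument. This is exactly why the paper abandons the direct $L/L_{10}$ partition and instead works with $L_{10}^*/L_{10}$, where $L_{10}$ is the sublattice generated by $\Lambda_{10}(L)$ (full rank by Lemma~\ref{lemtha}). There the determinant condition shows every class of $L_{10}^*/L_{10}$ has minimal representative of \emph{even} norm ($s\in\{0,12\}$, the quintic roots being irrational while $s$ is rational), hence $L_{10}^*$ is even and therefore integral; then $\disc(L_{10})$ and $\disc(L_{10}^*)=\disc(L_{10})^{-1}$ are both positive integers, forcing $\disc(L_{10})=1$ and $L_{10}=L$. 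Without this parity-plus-discriminant step (or some substitute ruling out norm-$12$ classes, e.g., showing the $N_i$ are non-integral at $s=12$, which neither you nor the paper attempts), your proof cannot conclude.
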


\begin{proof}We suppose that $\Lambda_{10}(L)$ does not generate $L$,
and consider the set of vectors $L_{10}^*$ dual to the lattice
$L_{10}\subsetneq L$ generated by $\Lambda_{10}(L)$.  We cannot have
$L_{10}^*=L_{10}$, since then $\Lambda_{10}(L)$ would generate an even
unimodular lattice of rank $96$, as its vectors span
$\mathbb{R}^{96}$ by Lemma \ref{lemtha}; in this case
$\Lambda_{10}(L)$ would have to generate all of $L$, since $L$ is
itself even unimodular of rank $96$.

Thus, there is some equivalence class $[x]\in L_{10}^*/L_{10}$ with
minimal-norm representative $x_0$ having rational norm not in $2\mathbb{Z}$.  We
have the inequality $$|\inprod{x_0}{x}|\leq 5$$ for all $x\in
L_{10}$, else we contradict the minimality of $x_0$ in $[x_0]$,
as in the rank-$72$ case.

Since $x_0\in L_{10}^*$, by definition $\inprod{x}{x_0}\in\mathbb{Z}$
for all $x\in \Lambda_{10}(L)$.  Because we also have the constraint
$|\inprod{x_0}{x}|\leq 5$ for $x\in \Lambda_{10}(L)$, we find for all $k>0$
\begin{equation}\label{hapki96}\sum_{x\in\Lambda_{10}(L)}
\inprod{x}{x_0}^{2k}=2\sum_{i=1}^5i^{2k}\cdot N_i(x_0)
\end{equation}where we recall $N_i(x_0)=\{x\in \Lambda_{10}(L)\vert
\inprod{x}{x_0}=i\}$.

It follows from the fact that $\Theta_{L,P_{d,x_0}}(z)$ is a cusp form
with vanishing $q^1$, $q^2$, $q^3$, and $q^4$ coefficients
that $$\Theta_{L,P_{d,x_0}}(z)\equiv0$$ for $d\in\{
2,4,6,8,10,14\}$.  Therefore,
\begin{equation}\label{vanish96}
\sum_{x\in\Lambda_{10}(L)}P_{d,x_0}(x)=0
\end{equation}for each of these $d$.
(Note that for $d\in\{ 2,4,6,8,10\}$ we also have this result directly
from the fact that $\Lambda_{10}(L)$ is a spherical $11$-design, as
these $P_{d,x_0}$ have degree at most $11$.)

From the fact that $|\Lambda_{10}(L)|=565866362880$, we obtain the additional equation
\begin{equation}
565866362880=|\Lambda_{10}(L)|=N_0(x_0)+2\sum_{i=1}^5 N_i(x_0).\label{L10}
\end{equation}

{}Combining this equation (\ref{L10}) with the equations (\ref{vanish96}) for 
$d=\{ 2,4,6,8,10,14\}$, we obtain a system of seven equations in the six
unknowns $N_i(x_0)$ ($0\leq i\leq 5$).  The determinant of this inhomogenous 
system's (extended) matrix factors as
\begin{equation}\label{DET}Ks(s-12)  Q(s),\end{equation} where
$s=\inprod{x_0}{x_0}$,
$K=-2^{42}\cdot3^{15}\cdot5^{10}\cdot7^5\cdot11\cdot13
\cdot17\cdot19\cdot29\cdot47\cdot53\cdot59$, and $Q(s)=25 s^5- 1275
s^4 + 26112 s^3- 267444
s^2+1362720 s-2741760$ is an irreducible quintic polynomial.  The only rational
$s=\inprod{x_0}{x_0}$ which give solutions to our system, therefore, are
the roots $$s=\inprod{x_0}{x_0}\in\{0,12\}$$ of the determinant (\ref{DET}).

But then, any $x\in [x_0]$ is of the form $x_0 + x_1$ for some
$x_1 \in L_{10}$, so we see that
 \begin{equation}\label{xx}\inprod{x}{x}=\inprod{x_0}{x_0}+2\inprod{x_0}{x_1}+
\inprod{x_1}{x_1}.\end{equation}
 Since $\inprod{x_0}{x_0}\in\{0,12\}\subset 2\mathbb{Z}$,
$\inprod{x_0}{x_1}\in\mathbb{Z}$, and $\inprod{x_1}{x_1}\in
2\mathbb{Z}$, we see that $\inprod{x}{x}\in 2\mathbb{Z}$, from which
it follows that $L^*_{10}$ is even.

In particular, then, $L_{10}^*$ is integral, whence it has integral
discriminant.  Because $L_{10}$ has finite index $[L_{10}:L]$ in $L$,
we see that $L_{10}$ must have positive integer discriminant
$$\disc(L_{10})=[L_{10}:L]^2\disc(L)=[L_{10}:L]^2,$$ as well.  But
then, we have both \begin{align*}[L_{10}:L]^2&\in \mathbb{Z}\\
[L_{10}:L]^{-2} & \in \mathbb{Z},
\end{align*} from which it follows that $[L_{10}:L]=1$, so that $L_{10}=L$.
\end{proof}

\section{Dimension 56}

We use a combination of the techniques developed in Sections \ref{72!!}
and \ref{96!!} to find a configuration result for extremal even unimodular
lattices of rank $56$.

\begin{theorem}\label{56!}
If $L$ is an extremal even unimodular lattice of rank $56$,
then $L$ is generated by $\Lambda_{6}(L)$.
\end{theorem}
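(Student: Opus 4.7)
The plan is to combine the linear-system approach of Theorem \ref{72!} with the duality/discriminant endgame of Theorem \ref{96!}, applied to the sublattice $L_6\subseteq L$ generated by $\Lambda_6(L)$.

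First I would assume, for contradiction, that $L_6\subsetneq L$. By Lemma \ref{lemtha}, $L_6$ has full rank $56$, and if $L_6^*=L_6$ then $L_6$ would be even unimodular of rank $56$ and the identity $\disc(L_6)=[L:L_6]^2\disc(L)=[L:L_6]^2=1$ would force $L_6=L$. Hence there is a nonzero class $[x_0]\in L_6^*/L_6$; I pick a minimal-norm representative and set $s=\inprod{x_0}{x_0}>0$. Exactly as in the rank-$72$ argument, minimality of $x_0$ in $[x_0]$ forces $|\inprod{x_0}{x}|\leq 3$ for every $x\in\Lambda_6(L)$ (otherwise $x_0\mp x\in[x_0]$ would be strictly shorter); and since $x_0\in L_6^*$ the pairings lie in $\{-3,\ldots,3\}$. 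The symmetry $N_i=N_{-i}$ then leaves only four independent counts $N_0(x_0),N_1(x_0),N_2(x_0),N_3(x_0)$.

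Next I would produce enough vanishing identities to overdetermine these four unknowns. The weighted theta $\Theta_{L,P_{d,x_0}}$ is a cusp form of weight $28+d$ vanishing to order at least $3$ at the cusp (since $L$ is extremal with $m(L)=6$); iterating the isomorphism $M_{k-12}\stackrel{\sim}{\to}M_k^0$ shows that the space of such forms has dimension $\dim M_{d-8}$, which is zero precisely for $d\in\{2,4,6,10\}$ among even $d>0$. For each such $d$ this gives
\[
\sum_{x\in\Lambda_6(L)}P_{d,x_0}(x)=0,
\]
and substituting $\inprod{x}{x}=6$ into the Gegenbauer expression (\ref{GEG}) converts each such identity into a linear relation in $N_0,\ldots,N_3$ with coefficients polynomial in $s$. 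Adjoining the count $|\Lambda_6(L)|=N_0+2(N_1+N_2+N_3)$, whose right-hand side is read off from $\Theta_L$, yields an inhomogeneous $5\times 4$ linear system. Compatibility requires the determinant $P(s)$ of the extended $5\times 5$ matrix to vanish.

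The third step, which I expect to be the principal obstacle, is the explicit computation and factorization of $P(s)$. By analogy with the factorization $Ks(s-12)Q(s)$ that appears in the proof of Theorem \ref{96!}, I would aim to show that $P(s)$ splits as $s$ times a polynomial whose rational roots (if any) all lie in $2\mathbb{Z}_{>0}$; what matters for the endgame is rationality of the roots, not positivity. Once this is in hand, the closing argument of Theorem \ref{96!} applies verbatim: for any $x\in L_6^*$, writing $x=x_0+x_1$ with $x_1\in L_6$,
\[
\inprod{x}{x}=s+2\inprod{x_0}{x_1}+\inprod{x_1}{x_1}\in 2\mathbb{Z},
\]
using $s\in 2\mathbb{Z}$, $\inprod{x_0}{x_1}\in\mathbb{Z}$ by duality, and $\inprod{x_1}{x_1}\in 2\mathbb{Z}$ by evenness of $L_6$. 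Hence $L_6^*$ is even, hence integral, so $\disc(L_6^*)=[L:L_6]^{-2}\in\mathbb{Z}$; together with $\disc(L_6)=[L:L_6]^2\in\mathbb{Z}$ this forces $[L:L_6]=1$, contradicting $L_6\subsetneq L$.
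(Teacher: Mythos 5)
Your strategy is sound in outline but takes a genuinely different route from the paper, and it leaves the decisive step unexecuted. The paper handles rank $56$ the way it handles rank $72$: it works directly with classes of $L$ modulo the sublattice generated by $\Lambda_6(L)$, takes a minimal representative $x_0\in L$ of a putative nontrivial class, so that $\inprod{x_0}{x_0}=2t$ is an even integer with $t\geq 4$, and assembles exactly the $5\times 4$ inhomogeneous system you describe (the identities (\ref{vanish56}) for $d\in\{2,4,6,10\}$ plus the count $|\Lambda_6(L)|=15590400$). The extended matrix has determinant $-Kt^2Q(t)$ with $Q$ an \emph{irreducible} sextic, so no integral $t\geq 4$ is admissible and every class is represented by a vector of norm at most $6$. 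Your route instead imports the $L_6^*/L_6$ and discriminant machinery from the rank-$96$ proof. That machinery is needed in rank $96$ only because the determinant there has the nonzero rational root $s=12$, so one cannot immediately rule out a nontrivial dual quotient and must fall back on evenness and integrality of the discriminant. Here, since $Q$ turns out to have no rational roots, your endgame degenerates: the only rational root of the determinant is $s=0$, which says directly that $L_6^*=L_6$ --- the case you already disposed of at the outset. Nothing you assert is false (the weight count $28+d$, the identification of the admissible degrees $\{2,4,6,10\}$ via $\dim M_{d-8}=0$, the bound $|\inprod{x_0}{x}|\leq 3$, and the integrality of the pairings for $x_0\in L_6^*$ are all correct), but the dual-lattice detour is heavier than what the problem requires.

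The genuine gap is the one you flag yourself: the entire arithmetic content of the theorem lives in the explicit computation and factorization of the determinant $P(s)$, and you only conjecture its shape by analogy with rank $96$. A conditional statement of the form ``if the rational roots of $P$ all lie in $2\mathbb{Z}_{\geq 0}$, then the discriminant argument closes'' is a correct reduction, but it is not a proof until the determinant is actually computed; had $P$ possessed a rational root outside $2\mathbb{Z}$, your argument would have no way to proceed, and had it possessed an even integral root $s\geq 8$ (as the direct approach must also exclude), the theorem itself would be in doubt. The paper supplies precisely this computation, and it is the step you cannot defer.
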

\begin{proof}
We partition $L$ into equivalence classes modulo $\Lambda_6(L)$.
As in the rank-$72$ case, it suffices to show that any class
$[x]\in L/(\Lambda_6(L))$ is represented by a vector $x_0\in [x]$
with $\inprod{x_0}{x_0}\leq 6$.

Let $x_0$ a minimal representative of any equivalence class $[x_0]\in
L/(\Lambda_6(L))$.
We have $\inprod{x_0}{x_0}=2t$ for some  $t\geq 4$ if $x_0\neq 0$
and we have the inequality
$$|\inprod{x_0}{x}|\leq 3$$ for all $x\in \Lambda_6(L)$.

{}From this condition, we obtain the equation \begin{align}
\label{hapki56}\sum_{x\in\Lambda_6(L)}\inprod{x}{x_0}^{2k}=2\sum_{i=1}^3\cdot
i^{2k}\cdot N_i(x_0),
\end{align}for all $k>0$.

It follows from the fact that $\Theta_{L,P_{d,x_0}}(z)$ is a cusp form
with vanishing $q^1$ and $q^2$ coefficients
that $$\Theta_{L,P_{d,x_0}}(z)\equiv0$$ for $d\in\{ 2,4,6,10\}$.  This
gives
\begin{equation}\label{vanish56}
\sum_{x\in\Lambda_{6}(L)}P_{d,x_0}(x)=0,
\end{equation}for each $d\in\{ 2,4,6,10\}$.

Combining the fact that $$N_0(x_0)+2\sum_{i=1}^{3}N_i(x_0)=|\Lambda_6(L)|=15590400$$ with (\ref{hapki56}) and (\ref{vanish56}) for
$d\in\{2,4,6,10\}$, we obtain a system of five equations in the four
unknowns  $N_i(x_0)$ ($0\leq i\leq 3$).  This system's (extended) matrix has determinant
\begin{equation}
\label{DET56} -Kt^2Q(t) \end{equation} where
$K=2^{28}\cdot3^8\cdot 5^3\cdot 7\cdot 29\cdot 31$ and $Q(t)$ is the
irreducible sextic {\small$$ Q(t)=8976 t^6-76120 t^5+104624 t^4+533337
t^3 - 972400 t^2- 1952280 t+3644256.$$}Thus, the only integral
solution to the system is $t=0$.  But then we cannot have
$\inprod{x_0}{x_0}=2t>6$, so we are done.\end{proof}

\section*{Acknowledgements.}This work was partially
supported by a fellowship from the 2006 Harvard College Program for
Research in Science and Engineering.

The author is extremely grateful to Professor Noam 
D. Elkies, under whose direction this research was conducted, for his instruction, 
commentary on the work, and remarks on several earlier drafts of this article.  He
also thanks Gabriele Nebe and an anonymous referee for their helpful comments and 
suggestions on earlier drafts of this article.


\begin{thebibliography}{99}
\bibitem{Geg}C. Bachoc, G. Nebe, B. Venkov, Odd unimodular lattices of
minimum 4, \textit{Acta Aritmetica} \textbf{101}(2) (2002).

\bibitem{Conway:SPLAG}J. H. Conway, N. J. A. Sloane, \textit{Sphere
Packing, Lattices and Groups}, 3rd edition (Springer-Verlag,
1999).

\bibitem{Niemeier}H.-V. Niemeier, Definite quadratische Formen der
Dimension 24 und Diskriminante 1 (German), \textit{J. Number Thy.}
\textbf{5} (1973) 142-178.

\bibitem{Ebeling}W. Ebeling,  \textit{Lattices and Codes}, 2nd
edition (Vieweg, 2002).

\bibitem{Ozeki1}M. Ozeki, On even unimodular positive definite
quadratic lattices of rank $32$, \textit{Math.~Z.} \textbf{191}(2)
(1986) 283-291.

\bibitem{Ozeki2}M. Ozeki, On the structure of even unimodular extremal
lattices of rank $40$, \textit{Rocky Mtn. J. Math.} \textbf{19}(3) (1989)
847-862.

\bibitem{Ozeki3}M. Ozeki, On the configurations of even unimodular
lattices of rank $48$, \textit{Arch. Math.} \textbf{46}(1) (1986) 247-287.

\bibitem{Schoeneberg}A. Schoeneberg, Das verhalten von mehrfachen 
thetareihen bei modulsubstitutionen (German),
\textit{Math. Ann.} \textbf{116}(1) (1939) 511-523.

\bibitem{Serre:course}J.-P. Serre, \textit{A Course in Arithmetic}
(Springer-Verlag, 1973).

\bibitem{Venkov:32}B. B. Venkov, Even unimodular Euclidean lattices of
dimension 32 (Russian), \textit{Zap. Nauchn. Sem. Leningrad. Otdel. Math.
Inst. Steklov (LOMI)} \textbf{116} (1982) 44-55.

\bibitem{Venkov:24}B. B. Venkov, On the classification of integral
even unimodular 24-dimensional quadratic forms, \textit{Proc. Steklov Inst.
Math.} \textbf{148} (1980) 63-74.

\bibitem{Venkov:Res}B. B. Venkov,  R\'{e}seaux et designs sph\'{e}riques
(French), in \textit{R\'{e}seaux Euclidiens, Designs Sph\'{e}riques et 
Formes Modulaires}, Monogr. Enseign. Math., Vol. 37 (Enseignement Math.,
 2001), pp. 10-86. 

\end{thebibliography}
\end{document}